\numberwithin{equation}{section}
\theoremstyle{plain}
\newtheorem{theorem}[equation]{Theorem}
\newtheorem{lemma}[equation]{Lemma}
\newtheorem*{(DQ1)}{(DQ1)}
\theoremstyle{definition}
\theoremstyle{remark}
\begin{document}
\title {On flow-equivalence of R-graph shifts}
\author{Wolfgang Krieger}
\begin{abstract}
We show that Property $(A)$ of subshifts and the semigroup, that is associated to subshifts with Property (A), are invariants of  flow equivalence. 
We show for certain  $\mathcal R$-graphs 
that their isomorphism is implied by the  flow equivalence of their $\mathcal R$-graph shifts. 
\end{abstract}
\maketitle

\section{Introduction}
Let $\Sigma$ be a finite alphabet, and let $S_\Sigma$ be the shift 
on the shift space $\Sigma^{\Bbb Z}$,
$$
S_\Sigma((x_{i})_{i \in \Bbb Z}) =  (x_{i+1})_{i \in \Bbb Z}, 
\qquad 
(x_{i})_{i
\in \Bbb Z}  \in \Sigma^{\Bbb Z}.
$$
$S_\Sigma$-invariant closed subsets $X$ of $\Sigma^{\Bbb Z}$  (more precisely, 
with $S_X$ denoting the restriction 
of $S_\Sigma$ to $X$,
the dynamical systems $( X , S_X )$  are called  subshifts. These are the subject of  symbolic dynamics. For an introduction to symbolic dynamics see \cite {Ki} or  \cite{LM}. 

A word is called admissible for a subshift $X\subset\Sigma^{\Bbb Z}$  if it appears in a point of $X$. We denote the set of admissible words of a subshift $X\subset\Sigma^{\Bbb Z}$ by 
$\mathcal L(X)$. The language $\mathcal L(X)$ is  factorial and bi-extensible, and every factorial and bi-extensible language is the set of admissible words af a unique subshift.

Let $\bullet$ be a symbol that is not in  $\Sigma$, and
consider  a subshift $X \subset\Sigma^{\Bbb Z}.$ Denote by $\varphi^{(\sigma)}$ the mapping that assigns to a word $a\in \mathcal L (X)$ the word that is obtained from $a$ by carrying out the substitution that  replaces the symbol $\sigma$ by the word $\sigma\bullet$. The set of subwords of the words in 
$ \varphi^{(\sigma)}(\mathcal L (X))$ is a factorial and bi-extensible language, and we denote the subshift that it determines by $X^{(\sigma)}$. One says that the subshift $X^{(\sigma)}$ arises from the subshift $X$ by symbol expansion. In Section 2  we describe some effects of symbol expansion.

Subshifts $X \subset  \Sigma^{\Bbb Z}$ and $\widetilde{X} \subset 
\widetilde{ \Sigma}^{\Bbb Z}$ are called flow equivalent if there exists a sequence $Z_k, 1 \leq k \leq K, K \in \Bbb N,$ of subshifts, such that $ X = Z_1 $ and $\widetilde{X} = Z_K $,  and such that $Z_k $ is topologically conjugate to $Z_{k-1}$, or $Z_k $ is obtained from $Z_{k-1}$ by symbol expansion, or  $Z_{k-1} $ is obtained from $Z_k $ by symbol expansion, $1<k \leq K$. Flow equivalence was introduced by Parry and Sullivan in 1975 \cite {PS}. Next to topological conjugacy it is one of the fundamental equivalence relations for subshifts.

The notions of  $\mathcal R$-graph, $\mathcal R$-graph semigroup, and  
$\mathcal R$-graph shift were introduced in \cite {Kr2}. The class of $\mathcal R$-graph shifts  contains the class of Markov-Dyck shifts
 \cite {M3}.
In Section 5 we show for certain $\mathcal R$-graphs, that the flow equivalence of their 
$\mathcal R$-graph shifts implies their isomorphism. This extends a result of Costa and Steinberg \cite{CS} for Markov-Dyck shifts. The proof  uses  Property $(A)$  and  the semigroup that is associated to subshifts with Property  $(A)$ \cite{Kr1}.
In Section 3 we prove invariance under flow equivalence of Property $(A)$  and in Section 4  we prove invariance under flow equivalence of the associated semigroup.
For 
 an extension of the theory beyond subshifts with Property $(A)$
 see Costa and Steinberg \cite {CS}.

In Section 5 we consider $\mathcal R$-graph shifts. In \cite{HK} there was given a mecessary and sufficient condition
for an $\mathcal R$-graph to have an $\mathcal R$-graph shift with Property $(A)$, whose associated semigroup is the  $\mathcal R$-graph semigroup of the $\mathcal R$-graph. Under this condition we prove in Section 5, that the flow equivalence of  $\mathcal R$-graph shifts implies the isomorphism of  the underlying $\mathcal R$-graphs. 

\section{Symbol expansion}
 
We introduce  notation for subshifts $X \subset \Sigma^{\Bbb Z}$. The $S_X$-orbit of a point $x\in X$ we denote by $O_X(x)$, and
for an $S_X$-invariant set $A \subset X$ we denote the set of $S_X$-orbits in $A$ by $\Omega(A)$. The period of a periodic point $p\in X$ we denote by $\pi(p)$.
We set
$$
	x_{[i,j]}Ê=Ê(x_{k})_{i\leq k \leq j},   
$$
and
$$
X_{[i,j]}Ê=Ê\{ x_{[i,j]} : x\in X \}, \quad i,j \in \Bbb Z, i \leq j, \qquad  x \in X, 
$$
and we use similar notation in the case that indices range in semi-infinite intervals.
(The elements in $X_{[i,j]}, X_{[i,\infty)},X_{(\infty,i)}$ can be identified with the words they carry. From the context it becomes clear, if such an identification is made.)
We set
$$
\Gamma^+_X(a) = \{ x^+ \in X_{(j, \infty)}: a x^+ \in X_{[i, \infty)}\}, \quad 
a \in  x_{[i,j]}, \quad i,j \in \Bbb Z, i \leq j.
$$
The notation $\Gamma^-$ has the symmetric meaning.
We also set
$$
\omega^+_X(a) = \bigcap_{x^-\in \Gamma^-(a)} \{ x^+ \in \Gamma^+(a):x^-a x^+ \in X\}, \quad
 a \in  x_{[i,j]}, \quad i,j \in \Bbb Z, i \leq j.
$$
The notation $\omega^-$ has the symmetric meaning.
And we set
$$
\Gamma_X(a) = \{ (x^-,x^+) \in \Gamma^-(a)   \times\Gamma^+(a)  : x^-a x^+ \in X\}, \quad
a \in  x_{[i,j]}, \quad i,j \in \Bbb Z, i \leq j.
$$

Let $\sigma \in \Sigma$,  let $\bullet$ be a symbol that is not in  $\Sigma$, and
consider  for a  subshifts $X \subset\Sigma^{\Bbb Z}$ the subshift  
$X^{(\sigma)} \subset(\Sigma\cup\{ \bullet \})^{\Bbb Z}$.
We denote by $\varphi^{(\sigma)}_-$($\varphi^{(\sigma)}_+$) the mapping that assigns to
$x^-\in X_{(-\infty, 0)}$($x^+\in X_{[0, \infty)}$) the point n 
$X^{(\sigma)}_{(-\infty, 0)}$($X^{(\sigma)}_{[0, \infty)}$) that is obtained
from $x^-$($x^+$) by carrying out the substitution that replaces the symbol $\sigma$ by the word $\sigma\bullet$. Also we denote by $ \varphi^{(\sigma)}$ the mapping that assigns to a point $x\in X$
 the point  in $X^{(\sigma)}$, that is given by
 $$
\varphi^{(\sigma)}(x)_{(-\infty, 0)} =   \varphi_-^{(\sigma)}(x_{(-\infty, 0)}),
\quad
\varphi^{(\sigma)}(x)_{[0,-\infty)} =   \varphi_+^{(\sigma)}(x_{[0,-\infty)}).
$$
One observes that
$$
\varphi^{(\sigma)}(O_X(x)) \subset O_{X^{(\sigma)}}( \varphi^{(\sigma)} (x)), \quad x \in X.
$$
For precision we note, that one has, with $\ell^-(x,n)$($\ell^+(x,n)$) denoting the length of 
$ \varphi^{(\sigma)} (x_{[-n, 0)}  )  $($ \varphi^{(\sigma)} (x_{[0,n)}  ) $), that
\begin{align*}
&\varphi^{(\sigma)}(S_X^{-n}(x)) = S_{X^{(\sigma)}}^{-\ell^-(x,n)}( \varphi^{(\sigma)} (x)), \\
&\varphi^{(\sigma)}(S_X^{-n}(x)) = S_{X^{(\sigma)}}^{-\ell^-(x,n)}( \varphi^{(\sigma)} (x)),
\quad n \in \Bbb N.
\end{align*}
Also,
$$
\varphi^{(\sigma)}  (X) \cup S_{X^{(\sigma)}}(  \varphi^{(\sigma)}  (X)).
$$
We denote the bijection of $\Omega(X)$ onto $\Omega(X^{(\sigma)})$ that assigns to the 
$S_X$-orbit of $x\in X$ the $S_{X^{(\sigma)}}$\negthinspace-orbit of $ \varphi^{(\sigma)} (x)$ by $\xi_{\sigma}$.

\begin{lemma} 

For a subshift $X \subset\Sigma^{\Bbb Z}$ and for $\sigma\in \Sigma, a \in \mathcal L(X),$ one has
$$
\varphi_+^{(\sigma)}(\omega_{X}^+(a)) = 
 \omega_{X^{(\sigma)}}^+ (\varphi ^{(\sigma)} (a)).
$$

\end{lemma}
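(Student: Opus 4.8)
The plan is to deduce the asserted set identity from two structural properties of the symbol expansion, after which the conclusion follows by a routine membership chase. Write $b = \varphi^{(\sigma)}(a)$ for the expanded word. The decisive feature of the substitution $\sigma \mapsto \sigma\bullet$ is that in $X^{(\sigma)}$ the symbol $\bullet$ occurs only immediately after $\sigma$, and every $\sigma$ is immediately followed by $\bullet$; consequently the substitution is inverted unambiguously by deleting every $\bullet$, and both $b$ and every left- or right-ray of a point of $X^{(\sigma)}$ decompose along the symbol boundaries inherited from $X$.

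First I would establish the context bijection: (A) the map $\varphi^{(\sigma)}_-$ sends $\Gamma^-_X(a)$ bijectively onto $\Gamma^-_{X^{(\sigma)}}(b)$, and $\varphi^{(\sigma)}_+$ sends $\Gamma^+_X(a)$ bijectively onto $\Gamma^+_{X^{(\sigma)}}(b)$. Injectivity is immediate, since $\varphi^{(\sigma)}_\pm$ is inverted by deletion of $\bullet$. For the forward inclusions one checks that $\varphi^{(\sigma)}$ carries $a x^+$ and $x^- a$ to $b\,\varphi^{(\sigma)}_+(x^+)$ and $\varphi^{(\sigma)}_-(x^-)\,b$, which lie in the appropriate one-sided spaces of $X^{(\sigma)}$. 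The point requiring care — and the main obstacle — is surjectivity: given $y^+ \in \Gamma^+_{X^{(\sigma)}}(b)$ one must see that $y^+$ begins at a symbol boundary, i.e. does not begin with a dangling $\bullet$. This is exactly where the clean-cut structure enters: the last symbol of $b$ is either some $\tau \neq \sigma$, whose successor cannot be $\bullet$, or the $\bullet$ terminating an expanded $\sigma$, whose successor is again not $\bullet$; hence deleting the $\bullet$'s from $b\,y^+$ returns a ray $a x^+ \in X_{[i,\infty)}$ with $\varphi^{(\sigma)}_+(x^+) = y^+$. The symmetric argument handles $\Gamma^-$.

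Second I would record the admissibility transfer: (B) for $x^- \in \Gamma^-_X(a)$ and $x^+ \in \Gamma^+_X(a)$ one has $x^- a x^+ \in X$ if and only if $\varphi^{(\sigma)}_-(x^-)\, b\, \varphi^{(\sigma)}_+(x^+) \in X^{(\sigma)}$. Here the left-hand bi-infinite point is sent by $\varphi^{(\sigma)}$ precisely to the right-hand one, so one direction is the inclusion $\varphi^{(\sigma)}(X) \subset X^{(\sigma)}$; the converse uses that membership in $X^{(\sigma)}$ is tested on finite subwords, which by definition of $X^{(\sigma)}$ are subwords of the words in $\varphi^{(\sigma)}(\mathcal L(X))$, so that deleting $\bullet$'s returns subwords of the original point witnessing its membership in $X$.

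Granting (A) and (B), the identity is a direct verification. Unwinding the definition, $x^+ \in \omega^+_X(a)$ means that $x^+ \in \Gamma^+_X(a)$ and that $x^- a x^+ \in X$ for every $x^- \in \Gamma^-_X(a)$. Using (A) to rewrite the quantifier over $x^-$ as one over $y^- = \varphi^{(\sigma)}_-(x^-)$ ranging over all of $\Gamma^-_{X^{(\sigma)}}(b)$, and (B) to transport each concatenation condition, this is equivalent to $\varphi^{(\sigma)}_+(x^+) \in \Gamma^+_{X^{(\sigma)}}(b)$ together with $y^- b\,\varphi^{(\sigma)}_+(x^+) \in X^{(\sigma)}$ for every $y^- \in \Gamma^-_{X^{(\sigma)}}(b)$, that is, $\varphi^{(\sigma)}_+(x^+) \in \omega^+_{X^{(\sigma)}}(b)$. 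Since $\varphi^{(\sigma)}_+$ is injective and, by (A), surjective onto the relevant context set, applying it to $\omega^+_X(a)$ yields exactly $\omega^+_{X^{(\sigma)}}(b)$, as claimed.
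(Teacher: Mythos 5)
Your proof is correct and takes essentially the same route as the paper's: the paper's argument hinges on exactly your claim (A) — a left context $y^-$ of $b=\varphi^{(\sigma)}(a)$ cannot end in $\sigma$ because $b_0\neq\bullet$, hence $y^-$ lifts under $\varphi^{(\sigma)}_-$ to an element of $\Gamma^-_X(a)$ — combined with the same transfer of admissibility under substitution and deletion of $\bullet$. Your write-up merely packages the context bijection and the two-way admissibility transfer as explicit preliminary claims before the membership chase, where the paper carries out one inclusion directly and notes that the converse is symmetric.
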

\begin{proof} 
We prove that $  \varphi ^{(\sigma)}(\omega_{X}^+(a))  \subset  \omega_{X^{(\sigma)}}^+ (\varphi ^{(\sigma)} (a)) $. Let $x^+\in \omega^+_X(a)$, and let
$$
y^- \in \Gamma_{X^{(\sigma)}}^-(   \varphi ^{(\sigma)} (a)) .
$$
It follows from $ \varphi ^{(\sigma)}(a)_0\neq\bullet$, that $y^-_{-1}   \neq \sigma$, and one sees that
$ y^- $ is in the image of $ \varphi ^{(\sigma)}_-  $. Its inverse image $x^-$ under
$ \varphi ^{(\sigma)}_-  $ is in $\Gamma^-_X(a)$. It follows that $  x^-ax^+ \in X$, and therefore 
$$
\varphi ^{(\sigma)}(x^-ax^+)  = y^-  \varphi ^{(\sigma)} (a)  \varphi_+ ^{(\sigma)} (x^+) \in X^{(\sigma)},
$$
and this means that $  \varphi_+ ^{(\sigma)} (x^+) \in \omega_{X^{(\sigma)}}^+ (\varphi ^{(\sigma)} (a)) $.

For the converse one has a similar argument.
\end{proof}

\begin{lemma} 
For a subshift $X \subset\Sigma^{\Bbb Z}$ and for $\sigma\in \Sigma, b, b^{\prime} \in \mathcal L(X),$ one has
$$
\Gamma_X (b) =   \Gamma_X ( b^{\prime}),
$$
if and only if
$$
\Gamma_{X^{(\sigma)}} (\varphi ^{(\sigma)}(b)) =
 \Gamma_{X^{(\sigma)}} (\varphi ^{(\sigma)}( b^{\prime})).
$$
\end{lemma}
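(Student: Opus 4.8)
The plan is to deduce both implications from the single set identity
$$
\Gamma_{X^{(\sigma)}}(\varphi^{(\sigma)}(b)) = \{\, (\varphi_-^{(\sigma)}(x^-),\varphi_+^{(\sigma)}(x^+)) : (x^-,x^+)\in\Gamma_X(b)\,\}, \qquad (\ast)
$$
valid for every $b\in\mathcal L(X)$. Granting $(\ast)$, the lemma is immediate: the map $(x^-,x^+)\mapsto(\varphi_-^{(\sigma)}(x^-),\varphi_+^{(\sigma)}(x^+))$ is injective, being a product of the injective substitution maps $\varphi_-^{(\sigma)}$ and $\varphi_+^{(\sigma)}$, so it carries $\Gamma_X(b)=\Gamma_X(b')$ to equality of the right-hand sides of $(\ast)$, and conversely equal images force equal preimages. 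Thus $\Gamma_X(b)=\Gamma_X(b')$ holds if and only if $\Gamma_{X^{(\sigma)}}(\varphi^{(\sigma)}(b))=\Gamma_{X^{(\sigma)}}(\varphi^{(\sigma)}(b'))$.

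For the inclusion $\supseteq$ in $(\ast)$ I would start from a pair $(x^-,x^+)\in\Gamma_X(b)$, so that $x^-bx^+\in X$. Since the substitution $\sigma\mapsto\sigma\bullet$ keeps every inserted $\bullet$ inside the block of the $\sigma$ that produced it, $\varphi^{(\sigma)}$ factors across the decomposition, giving $\varphi^{(\sigma)}(x^-bx^+)=\varphi_-^{(\sigma)}(x^-)\,\varphi^{(\sigma)}(b)\,\varphi_+^{(\sigma)}(x^+)\in X^{(\sigma)}$. This exhibits $(\varphi_-^{(\sigma)}(x^-),\varphi_+^{(\sigma)}(x^+))$ as a context pair of $\varphi^{(\sigma)}(b)$.

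The reverse inclusion is where the boundary bookkeeping is needed, and I expect it to be the main obstacle. Given $(y^-,y^+)\in\Gamma_{X^{(\sigma)}}(\varphi^{(\sigma)}(b))$, I must check that $y^-$ and $y^+$ are genuine images of $\varphi_-^{(\sigma)}$ and $\varphi_+^{(\sigma)}$. On the left the argument is exactly the one used in the proof of the previous lemma: since $\varphi^{(\sigma)}(b)_0\neq\bullet$ one gets $y^-_{-1}\neq\sigma$, so $y^-$ breaks at a clean block boundary and lies in the image of $\varphi_-^{(\sigma)}$. On the right the symmetric observation is that $\varphi^{(\sigma)}(b)$ never ends in a dangling $\sigma$, because a terminal $\sigma$ of $b$ is itself expanded to $\sigma\bullet$; hence $\varphi^{(\sigma)}(b)$ ends either in $\bullet$ or in a symbol of $\Sigma\setminus\{\sigma\}$, and in $X^{(\sigma)}$ neither of these is ever followed by $\bullet$. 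Therefore $y^+_0\neq\bullet$, and since in $X^{(\sigma)}$ every $\sigma$ is immediately followed by $\bullet$ and every $\bullet$ immediately preceded by $\sigma$, the sequence $y^+$ parses as a $\varphi_+^{(\sigma)}$-image. Writing $x^-$ and $x^+$ for the resulting preimages, we have $y^-\varphi^{(\sigma)}(b)y^+=\varphi^{(\sigma)}(x^-bx^+)$; since $\varphi^{(\sigma)}$ reflects admissibility, a consequence of the construction of $X^{(\sigma)}$ recorded above, it follows that $x^-bx^+\in X$, that is $(x^-,x^+)\in\Gamma_X(b)$. This completes $(\ast)$ and with it the lemma.
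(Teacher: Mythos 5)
Your proof is correct and takes essentially the same route as the paper: the paper's own (very terse) proof rests on the inclusions $\Gamma^{\pm}_{X^{(\sigma)}}(\varphi^{(\sigma)}(a)) \subset \varphi^{(\sigma)}_{\pm}(\Gamma^{\pm}_X(a))$, $a \in \mathcal L(X)$, which are precisely the nontrivial half of your identity $(\ast)$, with the easy inclusion and the injectivity of the substitution maps supplying the rest. Your write-up simply makes explicit the boundary-parsing and injectivity steps that the paper leaves implicit.
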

\begin{proof} 
The lemma follows from
\begin{align*}
\Gamma^+_{X^{(\sigma)}}(\varphi^{(\sigma)}(a)  ) \subset\varphi_+^{(\sigma)}
(\Gamma^+_X(a)),
\ \
\Gamma^-_{X^{(\sigma)}}(\varphi^{(\sigma)}(a)  ) \subset \varphi_-^{(\sigma)}
(\Gamma^-_X(a)),\quad
a \in \mathcal L(X). \qed
 \end{align*}
\renewcommand{\qedsymbol}{}
\end{proof}

\section{Property (A)}

Given a subshift $X\subset \Sigma ^\Bbb Z$ we define a subshift of finite type  $A_{n}(X)$ by 
\begin{multline*}
 A_{n}(X) = \\
  \bigcap _{i\in \Bbb Z}
( \{x  \in X: 
x_{[i,\infty)} \in \omega_X^{+} (x_{[i - n, i)})\}\cap \{x  \in X: 
x_{(-\infty,i]} \in \omega_X^{-} (x_{(i,i + n]})\}),  \ \
n \in \Bbb N,
\end{multline*}
and we set
$$
A(X) = \bigcup_{n \in \Bbb N} A_{n}(X).
$$

\begin{lemma}
For a subshift  $X \subset\Sigma^{\Bbb Z}$, and for $\sigma \in \Sigma$, one has
\begin{align*}
\xi_{\sigma}(\Omega(A_n(X))) \subset \Omega(A_{2n}(X^{(\sigma)})), \qquad n \in \Bbb N, \tag 1
\end{align*}
and
\begin{align*}
\xi_{\sigma}^{-1}(\Omega(A_n(X^{(\sigma)}))) \subset \Omega(A_{n}(X)), \qquad n \in \Bbb N.
\tag 2
\end{align*}
\end{lemma}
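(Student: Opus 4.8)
The plan is to prove both inclusions by checking the corresponding statement for a representative point of each orbit: for $(1)$ that $\varphi^{(\sigma)}(x)\in A_{2n}(X^{(\sigma)})$ whenever $x\in A_n(X)$, and for $(2)$ that $x\in A_n(X)$ whenever $\varphi^{(\sigma)}(x)\in A_n(X^{(\sigma)})$. Since each $A_m(\cdot)$ is shift invariant, verifying membership of a single point of an orbit suffices, and $\xi_\sigma$ carries $O_X(x)$ to $O_{X^{(\sigma)}}(\varphi^{(\sigma)}(x))$. The one general tool I need beyond the two Lemmas of Section 2 is a monotonicity property of $\omega^\pm$: if $ca\in\mathcal L(X)$ then $\omega_X^+(a)\subset\omega_X^+(ca)$, and symmetrically $\omega_X^-(a)\subset\omega_X^-(ac)$. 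This is immediate from the definition, since any $z^-\in\Gamma^-(ca)$ yields $z^-c\in\Gamma^-(a)$, and then $z^- (ca) x^+=(z^-c)a x^+\in X$ follows from $x^+\in\omega_X^+(a)$. I would record this first.

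For $(1)$, set $y=\varphi^{(\sigma)}(x)$ and fix a position $i$. Call $i$ a boundary if $y_i\neq\bullet$; at a boundary corresponding to the original index $j$ one has $y_{[i,\infty)}=\varphi_+^{(\sigma)}(x_{[j,\infty)})$. At such $i$ the word $\varphi^{(\sigma)}(x_{[j-n,j)})$ has length at most $2n$ and equals $y_{[i',i)}$ for some $i'\geq i-2n$, hence is a suffix of $y_{[i-2n,i)}$. Applying $x\in A_n(X)$, then the identity $\varphi_+^{(\sigma)}(\omega_X^+(a))=\omega_{X^{(\sigma)}}^+(\varphi^{(\sigma)}(a))$ of the first Lemma of Section 2, and then monotonicity, gives $y_{[i,\infty)}\in\omega_{X^{(\sigma)}}^+(y_{[i-2n,i)})$. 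If instead $y_i=\bullet$, then $y_{i-1}=\sigma$, so $y_{[i-2n,i)}$ ends in $\sigma$ and every admissible future must begin with $\bullet$; I would show $\omega_{X^{(\sigma)}}^+(y_{[i-2n,i)})=\bullet\,\omega_{X^{(\sigma)}}^+(y_{[i-2n,i+1)})$ (splitting off the forced $\bullet$ does not alter the admissible pasts) and thereby reduce to the boundary case at $i+1$. The two conditions defining $A_{2n}$ being symmetric, the $\omega^-$ condition is handled the same way, using the symmetric counterpart of the first Lemma (proved by the same argument, since $\varphi^{(\sigma)}(a)$ never ends in $\sigma$, so a dangling $\sigma$ is absorbed by the preceding position).

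For $(2)$, suppose $y=\varphi^{(\sigma)}(x)\in A_n(X^{(\sigma)})$ and fix $j$; let $i$ be the boundary of $y$ with $y_{[i,\infty)}=\varphi_+^{(\sigma)}(x_{[j,\infty)})$. Now $y_{[i-n,i)}$ has length exactly $n$, while $\varphi^{(\sigma)}(x_{[j-n,j)})$ has length at least $n$ and ends at position $i$, so $y_{[i-n,i)}$ is a suffix of it. From $y\in A_n(X^{(\sigma)})$ and monotonicity I obtain $y_{[i,\infty)}\in\omega_{X^{(\sigma)}}^+(y_{[i-n,i)})\subset\omega_{X^{(\sigma)}}^+(\varphi^{(\sigma)}(x_{[j-n,j)}))=\varphi_+^{(\sigma)}(\omega_X^+(x_{[j-n,j)}))$; since $\varphi_+^{(\sigma)}$ is injective and $y_{[i,\infty)}=\varphi_+^{(\sigma)}(x_{[j,\infty)})$, this forces $x_{[j,\infty)}\in\omega_X^+(x_{[j-n,j)})$. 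The $\omega^-$ condition is symmetric, and no factor $2$ is lost, because here the short word lives in $y$ and the long word is its image; this is why $(2)$ keeps the index $n$ whereas $(1)$ must pass to $2n$.

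I expect the main obstacle to be the non-boundary positions in $(1)$: at a $\bullet$, or at a position carrying a $\sigma$ whose pair $\sigma\bullet$ is cut, the block $y_{[i-2n,i)}$ is not itself an image word, and one must argue that the forced neighbouring symbol can be split off without changing $\omega^\pm$, so that the check reduces to a genuine boundary. Getting this index bookkeeping right, and confirming that the doubling bound $2n$ is adequate in every case, is the delicate part; the remainder is a direct application of the Section 2 Lemmas and of monotonicity.
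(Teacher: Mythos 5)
Your proposal is correct and follows essentially the same route as the paper: both verify the $A_m$ conditions for the representative point $\varphi^{(\sigma)}(x)$ via Lemma 2.1 combined with monotonicity of $\omega^\pm$ under extension of the word (which the paper uses implicitly when passing from the length-$n$ window to the length-$(2n-\mu)$ window), and both dispose of the positions that split a $\sigma\bullet$ pair by the forced-symbol observation. Your write-up is merely more explicit where the paper is terse, namely in the non-boundary reduction (the paper's ``it is seen that (3) also holds''), the mirror version of Lemma 2.1 for $\omega^-$, and the argument for inclusion (2), which the paper leaves as ``a similar argument.''
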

\begin{proof}
We show (1).
Let $n\in \Bbb N$, let $x\in A_n(X)$, and let $i\in \Bbb Z$. Let $\mu$ be the number of times that the symbol $\bullet$ appears in $\varphi^{(\sigma)}(x)_{[i,i+2n]}$.
Assume that neither $x^{(\sigma)}_i = \bullet$, nor $x^{(\sigma)}_{i+2n-1} = \sigma$. 
Then
$$
\varphi^{(\sigma)}( x_{[i,i+ 2n-\mu)})  =\varphi^{(\sigma)}(x)_{[i,i+2n]}.
$$
From
$$
x_{[i+2n-\mu, \infty )} \in \omega^+_X(x_{[i, i+2n-\mu)}),
$$
it follows then by Lemma 2.1, that
\begin{align*}
 \varphi^{(\sigma)}(x)_{[i+2n, \infty )} \in \omega^+_{X^{(\sigma)}}(\varphi^{(\sigma)}(x)_{[i,i+2n)}).     \tag 3
\end{align*}
In the case that $x^{(\sigma)}_i = \bullet$, necessarily  $x^{(\sigma)}_{i-1} = \sigma$, 
and in the case that  $x^{(\sigma)}_{i+2n-1} = \sigma$, necessarily 
$x^{(\sigma)}_{i+2n}= \bullet$,
and in both cases it is seen that (3) also holds.

For (2) one has a similar argument.
\end{proof}

We recall from \cite {Kr1} the definition of Property (A).
For $n \in \Bbb N$ a subshift $X \subset  \Sigma ^{\Bbb 
Z},$ 
has property $(a, n, H),H \in \Bbb N,$  if for $h,
\widetilde {h} \geq 3H$ and for
$
I_-,  I_+,\widetilde{I}_-,  \widetilde{I}_+ \in \Bbb Z,
$
such that
$$
I_+-I_-, \widetilde{I}_+-\widetilde{I}_- \geq 3H,
$$
and for
$$
a \in A_{n}(X)_{(I_-,  I_+ ]}, \quad \widetilde{a} \in A_{n}(X)_{(\widetilde{I}_-, 
 \widetilde{I}_+ ]}, 
$$
such that
$$
a_{(I_-, I_- +  H]} = \tilde{a}_{(\widetilde{I}_-, \widetilde{I}_ ++ H]},\quad
a_{(  I_+  - H,   I_+ ]} = \tilde{a}_{( \widetilde{I}_+ - H,  \widetilde{I}_+]},
$$
one has that 
$$
\Gamma_X(a ) =\Gamma_X(\tilde{a}). 
$$
It is assumed, that $A(X) \neq \emptyset$.
The subshift $X \subset  \Sigma ^{\Bbb Z}$
has property $(A)$ if there are $H_{n}, n \in  \Bbb N$, such
that $X$ has the properties $(a, n, H_{n}), n \in \Bbb N $.

\begin{theorem} 
For a subshift $X \subset\Sigma^{\Bbb Z}$ and for $\sigma\in \Sigma,$  one 
has that $X$ has Property $(A)$ if and only if $X^{(\sigma)}$ has Property $(A)$.
\end{theorem}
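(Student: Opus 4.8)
The plan is to prove the two implications separately, in each case transporting the defining data of Property $(A)$ across the symbol expansion by means of $\varphi^{(\sigma)}$ and its inverse (deletion of every $\bullet$), using Lemma 2.2 to move equalities of the sets $\Gamma$ back and forth and Lemma 3.1 to control in which set $A_n$ a word lives. The level index is governed by Lemma 3.1: in deducing that $X$ has Property $(A)$ I feed words at level $n$ into $X^{(\sigma)}$ at level $2n$, whereas in the opposite implication words at level $m$ of $X^{(\sigma)}$ de-expand to words at level $m$ of $X$. I will also record at the end that $A(X)\neq\emptyset$ if and only if $A(X^{(\sigma)})\neq\emptyset$, which follows from Lemma 3.1 together with the bijectivity of $\xi_\sigma$.

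I would treat the implication ``$X^{(\sigma)}$ has $(A)$ $\Rightarrow$ $X$ has $(A)$'' first, as it is the cleaner one. Fix $n$, let $H$ be a parameter for which $X^{(\sigma)}$ has property $(a,2n,H)$, and claim $X$ has property $(a,n,H)$. Given $a,\tilde a\in A_n(X)$ of length $\geq 3H$ with matching $H$-prefixes and $H$-suffixes, pass to $\varphi^{(\sigma)}(a),\varphi^{(\sigma)}(\tilde a)$, which by Lemma 3.1(1) occur in $A_{2n}(X^{(\sigma)})$. Since $\varphi^{(\sigma)}$ never shortens a word and carries a common prefix (suffix) to a common prefix (suffix) of length at least as large, the images have length $\geq 3H$ and matching $H$-prefixes and $H$-suffixes; property $(a,2n,H)$ of $X^{(\sigma)}$ yields $\Gamma_{X^{(\sigma)}}(\varphi^{(\sigma)}(a))=\Gamma_{X^{(\sigma)}}(\varphi^{(\sigma)}(\tilde a))$, and Lemma 2.2 gives $\Gamma_X(a)=\Gamma_X(\tilde a)$.

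For ``$X$ has $(A)$ $\Rightarrow$ $X^{(\sigma)}$ has $(A)$'', fix $m$, let $H_m$ be the parameter of $X$ at level $m$, and set $H'=2H_m$. Given $\alpha,\tilde\alpha$ occurring in $A_m(X^{(\sigma)})$ with length $\geq 3H'$ and matching $H'$-prefixes and $H'$-suffixes, Lemma 3.1(2) realizes each as a subword of some $\varphi^{(\sigma)}(x)$ with $x\in A_m(X)$; let $a,\tilde a$ be obtained by deleting every $\bullet$. Then $a,\tilde a$ are words in $A_m(X)$, and since no two $\bullet$ are adjacent the deletion removes at most half the letters, so $a,\tilde a$ have length $\geq 3H_m$ and, the $\bullet$-positions in the agreeing prefixes and suffixes being identical, they agree on their $H_m$-prefixes and $H_m$-suffixes. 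Property $(a,m,H_m)$ gives $\Gamma_X(a)=\Gamma_X(\tilde a)$, whence $\Gamma_{X^{(\sigma)}}(\varphi^{(\sigma)}(a))=\Gamma_{X^{(\sigma)}}(\varphi^{(\sigma)}(\tilde a))$ by Lemma 2.2.

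The main obstacle is that $\alpha$ need not equal $\varphi^{(\sigma)}(a)$: a subword of $\varphi^{(\sigma)}(x)$ may begin with a $\bullet$ whose preceding $\sigma$ lies outside the window, or end with a $\sigma$ whose following $\bullet$ lies outside, so that $\varphi^{(\sigma)}(a)$ arises from $\alpha$ by stripping a leading $\bullet$ and/or appending a trailing $\bullet$. The matching prefixes and suffixes (of length $\geq 1$) force $\alpha$ and $\tilde\alpha$ to have the same boundary type, so it remains to verify two elementary transfer facts inside $X^{(\sigma)}$, which propagate the $\Gamma$-equality from $\varphi^{(\sigma)}(a),\varphi^{(\sigma)}(\tilde a)$ to $\alpha,\tilde\alpha$: if $w=w^-\bullet$ then $\Gamma_{X^{(\sigma)}}(w^-)=\{(y^-,\bullet y^+):(y^-,y^+)\in\Gamma_{X^{(\sigma)}}(w)\}$ (every right context of $w^-$ must begin with $\bullet$), and $\Gamma_{X^{(\sigma)}}(\bullet w)=\{(y^-,y^+):(y^-\bullet,y^+)\in\Gamma_{X^{(\sigma)}}(w)\}$. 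Each of these formulas expresses one side as a set determined by the other, so $\Gamma_{X^{(\sigma)}}(\varphi^{(\sigma)}(a))=\Gamma_{X^{(\sigma)}}(\varphi^{(\sigma)}(\tilde a))$ implies $\Gamma_{X^{(\sigma)}}(\alpha)=\Gamma_{X^{(\sigma)}}(\tilde\alpha)$. This establishes property $(a,m,H')$ of $X^{(\sigma)}$ for every $m$, and hence Property $(A)$; combined with the reverse implication and the transfer of $A(X)\neq\emptyset$, it completes the proof.
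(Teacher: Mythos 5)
Your proposal is correct and takes essentially the same route as the paper: the paper's entire proof of this theorem is the single sentence ``The theorem follows from Lemma 2.2 and Lemma 3.1,'' and your argument is precisely a detailed working-out of that, transferring the $\Gamma$-equalities via Lemma 2.2 and the $A_n$-membership (with the $n \mapsto 2n$ level shift) via Lemma 3.1. The boundary adjustments you supply (a leading $\bullet$ or trailing $\sigma$ in a window of $X^{(\sigma)}$) are exactly the details the paper leaves implicit, and your handling of them is sound.
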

\begin{proof} 
The theorem follows from Lemma 2.2 and Lemma 3.1.	
\end{proof}

\section{The associated semigroup}

Consider a subshift $X \subset  \Sigma ^{\Bbb Z}$ with Property $(A)$.
We denote the set of periodic points in $A(X)$ by $P(A(X))$.   We introduce a preorder relation 
$\gtrsim \negthinspace \negthinspace{(X)}$
into the set $P(A(X))$ where for 
$q, r \in  P(A(X)),q\gtrsim\negthinspace \negthinspace{(X)}\thinspace r ,$
 means that there exists a point in $A(X)$ that is left asymptotic to the orbit of $q$ and right asymptotic to the orbit of $r$. The equivalence relation on $P(A(X))$ that results from the preorder relation 
 $\gtrsim\negthinspace \negthinspace(X)$ we denote by 
 $\approx\negthinspace \negthinspace(X)$.
We denote the set of $\approx\negthinspace \negthinspace(X)$-equivalence classes by $\frak P(X)$.

\begin{lemma}
For a subshift $X \subset\Sigma^{\Bbb Z}$, for $\sigma \in \Sigma$.  $q,r\in P(A(X))$, and for $\sigma\in \Sigma,$ one has
$$
q \gtrsim \negthinspace \negthinspace(X) r,
$$
if and only if
$$
\varphi^{(\sigma)} (q) \gtrsim \negthinspace \negthinspace(X^{(\sigma)})\varphi^{(\sigma)} (r) .
$$
\end{lemma}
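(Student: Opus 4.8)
The plan is to transport the asymptotic relation along the orbit bijection $\xi_{\sigma}$, using Lemma 3.1 to keep track of membership in $A(X)$ and $A(X^{(\sigma)})$, and then to observe that symbol expansion both preserves and reflects left- and right-asymptoticity of orbits. Recall that $q \gtrsim \negthinspace \negthinspace(X)\, r$ means there is a point $z \in A(X)$ whose negative tail coincides with that of some point of $O_X(q)$ and whose positive tail coincides with that of some point of $O_X(r)$. The two directions of the equivalence are essentially dual, so I would prove the forward implication in detail and indicate the reverse.

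For the forward direction, suppose $z \in A(X)$ witnesses $q \gtrsim \negthinspace \negthinspace(X)\, r$, say $z \in A_n(X)$. By part (1) of Lemma 3.1, $\xi_{\sigma}(O_X(z)) = O_{X^{(\sigma)}}(\varphi^{(\sigma)}(z)) \subset \Omega(A_{2n}(X^{(\sigma)}))$, so $\varphi^{(\sigma)}(z) \in A(X^{(\sigma)})$. It remains to check that $\varphi^{(\sigma)}(z)$ is left asymptotic to $O_{X^{(\sigma)}}(\varphi^{(\sigma)}(q))$ and right asymptotic to $O_{X^{(\sigma)}}(\varphi^{(\sigma)}(r))$. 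Choose $q' \in O_X(q)$ with $z_{(-\infty,-N)} = q'_{(-\infty,-N)}$. Since $\varphi^{(\sigma)}$ acts by a local substitution, the expanded tails of $z$ and $q'$ carry the same symbols far to the left; they need not sit at the same indices, because the blocks $z_{[-N,0)}$ and $q'_{[-N,0)}$ may contain different numbers of occurrences of $\sigma$, but this discrepancy is only a bounded shift. Using the orbit-preservation $\varphi^{(\sigma)}(O_X(q)) \subset O_{X^{(\sigma)}}(\varphi^{(\sigma)}(q))$ I can realign by passing to a suitable shift $S_{X^{(\sigma)}}^{k}(\varphi^{(\sigma)}(q')) \in O_{X^{(\sigma)}}(\varphi^{(\sigma)}(q))$ whose negative tail agrees with that of $\varphi^{(\sigma)}(z)$; the positive side is symmetric. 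Hence $\varphi^{(\sigma)}(q) \gtrsim \negthinspace \negthinspace(X^{(\sigma)})\, \varphi^{(\sigma)}(r)$.

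For the reverse direction, suppose $\tilde z \in A(X^{(\sigma)})$ witnesses $\varphi^{(\sigma)}(q) \gtrsim \negthinspace \negthinspace(X^{(\sigma)})\, \varphi^{(\sigma)}(r)$. Since $A(X^{(\sigma)})$ is $S_{X^{(\sigma)}}$-invariant and since $X^{(\sigma)} = \varphi^{(\sigma)}(X) \cup S_{X^{(\sigma)}}(\varphi^{(\sigma)}(X))$, I may replace $\tilde z$ by a point of its orbit lying in the image of $\varphi^{(\sigma)}$, that is, $\tilde z = \varphi^{(\sigma)}(z)$ for some $z \in X$. Applying $\xi_{\sigma}^{-1}$ together with part (2) of Lemma 3.1 gives $z \in A(X)$. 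Finally, because the substitution replacing $\sigma$ by $\sigma \bullet$ is invertible on tails — one recovers $z$ from $\varphi^{(\sigma)}(z)$ by deleting every $\bullet$ — left- and right-asymptoticity of $\tilde z$ to $O_{X^{(\sigma)}}(\varphi^{(\sigma)}(q))$ and $O_{X^{(\sigma)}}(\varphi^{(\sigma)}(r))$ descend to the corresponding asymptoticities of $z$, again up to the bounded realigning shifts that are absorbed by the orbits. This yields $q \gtrsim \negthinspace \negthinspace(X)\, r$.

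The main obstacle I anticipate is purely bookkeeping: symbol expansion does not commute with the shift but only intertwines it up to the length change recorded by $\ell^-$ and $\ell^+$, so the agreement of tails is preserved only after an index-dependent shift. Phrasing asymptoticity at the level of orbits rather than of fixed representatives is what makes these shifts harmless, and I would take care to state the realignment cleanly so that both the forward and the reverse passages rest on the same orbit-level formulation.
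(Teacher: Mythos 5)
Your proposal is correct and follows essentially the same route as the paper, whose entire proof is the single line ``This follows from Lemma 3.1'': you invoke parts (1) and (2) of that lemma to transfer membership in $A(X)$ and $A(X^{(\sigma)})$ across $\xi_\sigma$, and the tail-realignment bookkeeping you spell out (orbit-level asymptoticity absorbing the index shifts caused by the substitution, and every point of $X^{(\sigma)}$ having an orbit representative in the image of $\varphi^{(\sigma)}$) is exactly what the paper leaves implicit. No gaps; your version simply makes the paper's terse argument explicit.
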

\begin{proof}
This follows from Lemma 3.1.
\end{proof}

We  recall  the construction of the associated semigroup. For a property $( A)$ subshift $X \subset \Sigma ^{ \Bbb Z}$ we denote by  $Y(X)$ the set of points in $X$
that are left asymptotic to a  point in $P(A(X))$ and also right-asymptotic to a  point 
in $P(A(X))$ .  Let $y, \tilde{y}\in Y(X),$ let $y$ be left asymptotic to $q \in P(A(X))$ and right 
asymptotic to $r \in P(A(X) ) ,$ and let 
$\tilde {y}$ be left asymptotic to $ \tilde{q} \in P(A(X))$ and right
asymptotic to $ \tilde{r} \in P(A(X))$. Given that 
$X$ has the properties $(a, n, H_{n}), n \in \Bbb N,$ we say
that $y$ and $\tilde {y}$ are equivalent, $y \approx\negthinspace(X)\ \tilde {y}$, if
$q  \approx\negthinspace(X)\ \tilde{q}$ and $ r  \approx\negthinspace(X)\ \tilde{r}$, and  
if for $n \in \Bbb N$ such that $ q, r, \tilde {q}, \tilde {r} 
\in P(A_n(X))$ and  for $I, J, \tilde
{I}, \tilde {J} \in \Bbb Z,  I < J, \tilde {I}< \tilde
{J},$ such that
$$
y_{(- \infty, I]} = q_{(- \infty, 0]},\quad  y_{(  J,\infty)} = r_{(  0,\infty)},
$$
$$
\tilde{y}_{(- \infty, \tilde{I}]} = \tilde{q}_{(- \infty, 0]},\quad \tilde{ y}_{(  \tilde{J},\infty)} = \tilde{r}_{( 
 0,\infty)},
$$
one has for $h\ge3 H_{n}$ and for 
$$ 
a \in X_{(I  -  h,J + h]}, \quad
\tilde {a} \in X_{(\tilde {I} -  h,\tilde {J} + h]}, 
$$
such that  
$$
a _{(I-  H_{n} ,  J + H_{n} ]} = y_{(I - H_{n} ,  J + H_{n} ]} ,\quad
 \tilde {a} _{(\tilde {I} -  H_{n} ,  \tilde {J} + H_{n} ]} =
\tilde{y}_{(\tilde {I} -  H_{n} ,  \tilde {J} + H_{n} ]},
$$
$$
a _{(I - h ,  I  - h + H_{n})} = \tilde {a} _{(\tilde {I} -
h , \tilde {I}  - h+ H_{n} )},
$$
$$
a _{(J + h- H_{n} ,  J  + h ]} =
 \tilde {a} _{(\tilde {J }+ h - H_{n}  , 
\tilde { J}  + h ]},
$$
and such that
$$
a_{(I - h, I]} \in A_{n}(X)_{(I - h, I]} , \quad
\tilde{a}_{( \tilde{J}- h, \tilde{I}]}\in A_{n}(X)_{( \tilde{J}- h, \tilde{I}]}\ ,
$$
$$
 a_{(J , J+h]} \in A_{n}(X)_{(J , J+h]}  , \quad
  \tilde{a}_{( \tilde{J}  , \tilde{J}+h]}  \in A_{n}(X)_{( \tilde{J}  , \tilde{J}+h]} ,
$$
that
$$
\Gamma_X(a) = \Gamma_X(\tilde {a}).
$$

To give $[Y(X)]_{ \approx\negthinspace(X)\ }$ the
structure of a semigroup, let $u, v \in Y(X)$, let $u$ be right asymptotic to
$q \in P(A(X))$ and let  $v$ be left asymptotic to
$r \in P(A(X))$. 
If here 
$q \gtrsim\negthinspace(X) r$, then $[u]_{ \approx\negthinspace(X)\ }[v]_{ \approx\negthinspace(X)\ }$ is set equal to $[y]_{ \approx\negthinspace(X)}$, where $y$ is any point in $Y$ such that there are
$n \in \Bbb N, I, J ,\hat {I},  \hat {J} \in \Bbb Z , I < J, \hat 
{I}<   \hat {J}, $ such that $ q, r \in A_{n}(X),$  and such that
$$
u_{(I, \infty)} = q_{(I, \infty)}, \quad
v_{(-\infty, J]} = r_{(-\infty, J]},
$$  
$$
y_{(- \infty,\hat{I} + H_{n}]}= u_{(- \infty,I + H_{n}]} ,\quad
y_{ (\hat{J}  - H_{n}, \infty)}= v_{ (J - H_{n}, \infty)},
$$
and 
$$
y_{(\hat{I}  , \hat{J}  ]}\in A_{n}(X)_{(\hat{I}  , \hat{J}  ]},
$$
provided that such a point $y$ exists. If such a point $y$ does not exist,  
$[u]_{ \approx\negthinspace(X)\ }[v]_{ \approx\negthinspace(X)\ }$ is  equal to zero.
Also, in the case that
one does not have $q \gtrsim\negthinspace(X) r,[u]_{\approx}[v]_{ \approx\negthinspace(X)\ }$ is equal to zero.

Consider a subshift $X\subset \Sigma^{\Bbb Z}$ with Property $(A)$.\ \negthinspace For 
$\frak p \in \frak P(X)$ we choose a $d^{(\frak p)} \in \frak p$, and we set 
$$
\mathcal D = \{d^{(\frak p)}  : \frak p \in \frak P(X)  \}.
$$
In order to facilitate the proof of its invariance under flow equivalence we give an alternate description of the semigroup  that is associated to $X$ in terms of the system $\mathcal D\subset Y_X$ of representatives of the equivalence relation $\approx\negthinspace(X)$. 
For $y \in O_X(d^{(\frak p)} ), \frak p \in \frak P(X),$ we define a $J(y, d^{(\frak q)})\in \Bbb Z$ by
$$
S_X^{-J(y, d^{(\frak p)}  )}(y) =   d^{(\frak p)}, \quad 
 0 \leq    \pi(d^{(\frak p)}) <\pi(d^{(\frak p)}).
$$
For $ \frak p \in \frak P(X)$ we set
$$
H( d^{(\frak p)}  ) = \min  \thinspace \{H \in \Bbb N: \Gamma_X( \frak p_{[0,H \pi( \frak p ))  }   ) =
  \Gamma_X( \frak p_{[0,(H +1) \pi(d^{(\frak p)} )) }   ) \}.
$$
We denote by $Y_X^-(\mathcal D)$, the set of points in $Y_X$, that are left  asymptotic to the orbit of a point in $\mathcal D$, and also right asymptotic to the orbit of a point in $\mathcal D$.
More precisely, we denote by $Y_X^-(d^{(\frak p)})$($Y_X^+(d^{(\frak p)} )$), the set of points in $Y_X$, that are left (right) asymptotic to the orbit of $d^{(\frak p)}, \frak p \in \frak P(X)$.
For
$$
y \in Y_X^-(d^{(\frak q)}q)\cap Y_X^+(d^{(\frak r)} ),  \qquad  \frak q, \frak r \in \frak P(X),
$$
we set
$$
I^-(y)  = \begin{cases}J(y, d^{(\frak q)}), &\text {if $y \in  O_X( d^{(\frak q)})$,}\\
\max \{I \in \Bbb Z : y_{(-\infty, I)} = d^{(\frak q)}_{(-\infty, 0)}   \}, & \text{if $y \not \in  
O_X( d^{(\frak q)}),$}
\end{cases}
$$
$$
I^+(y)  = \begin{cases}J(y, d^{(\frak r)}), &\text {if $y \in  O_X( d^{(\frak r)})$,}\\
\min \{I \in \Bbb Z : y_{[ I, \infty)} = d^{(\frak r)}_{[ 0, \infty)}   \}, & \text{if $y \not \in  
O_X( d^{(\frak r)}).$}
\end{cases}
$$
We say that $O,O^{\prime} \in \Omega(Y_X^{(D)})$ are $\approx\negthinspace(D)$-equivalent,  
if $O$ and $O^{\prime}$ are left asymptotic to the same periodic obit, and also right asymptotic to  the same periodic obit,
and if, with $\frak q \in \frak P$ such that $y$ and $y^{\prime}$ are right asymptotic to the orbit of $d^{(\frak q)}$ and with $\frak r \in \frak P$ such that $y$ and $y^{\prime}$ are left asymptotic to the orbit of $d^{(\frak r)}$,  there exist $y\in O$ and 
$y^{\prime} \in O^{\prime}$ such that
\begin{multline*}
\Gamma_X ( d^{(\frak q)}_{[0,H(d^{(\frak q)})\pi(d^{(\frak q)} ))}  y_{[I^-(y), I^+(y))} 
d^{(\frak r)}_{[0,H(d^{(\frak r)})\pi(d^{(\frak r)} ))}) = \\
\Gamma_X( d^{(\frak q)}_{[0,H(d^{(\frak q)})\pi(d^{(\frak q)} ))}   
y^{\prime}_{[I^-(y^{\prime}), I^+(y^{\prime}))} 
d^{(\frak r)}_{[0,H(d^{(\frak r)})\pi(d^{(\frak r)} ))}  ).
\end{multline*}
To give $\Omega(Y_X^{(D)})$ the structure of a semigroup, let $\frak q,\frak p,\frak r \in \frak P$, and let for
points
$$
u\in Y_X^-(\frak q)\cap Y_X^+(\frak p ), \qquad  v\in Y_X^-(\frak p)\cap Y_X^+(\frak r ),
$$
in case, that the word
\begin{align*}
d^{(\frak q)}_{[0, H(\frak  d^{(\frak q)}) \pi( d^{(\frak q)}))}
y_{[I^-(u), I^+(u))}
 d^{(\frak p)}_{[0, H( d^{(\frak p)}) \pi(  d^{(\frak p)}))}
y_{[I^-(v), I^+(v))}
 d^{(\frak r)}_{[0, H( d^{(\frak r)}) \pi( d^{(\frak r)}))}, \tag 3
\end{align*}
is admissible for $X$, let
a point $y[u, v] \in Y_X^-(\frak q)\cap Y_X^+(\frak r ) $ be given by
$$
y[u, v]_{(-\infty, 0)} =  d^{(\frak q)}_{(-\infty, 0)},
$$
and
$$
y[u, v]_{[0, \infty)} = d^{(\frak q)}_{[0, H(\frak q) \pi( \frak q))}
y_{[I^-(u), I^+(u))}
 d^{(\frak p)}_{[0, H(d^{(\frak p)}) \pi( d^{(\frak p)})}
y_{[I^-(v), I^+(v))}
 d^{(d^{(\frak r)})}_{[0, \infty)}.
$$
and set
\begin{align*}
[O(u)]_{\approx (D)}[O(v)]_{\approx (D)} = [O(y[u, v])]_{\approx (D)}.
\end{align*}
In case, that the word (3) is not admissible for $X$,  set
\begin{align*}
[O(u)]_{\approx (D)}[O(v)]_{\approx (D)} = 0. 
\end{align*}
Also, for
$\frak q,\frak r \in \frak P$, if
 $$
Y_X^-(\frak q) \cap A(X) \cap Y_X^+(\frak r) \neq \emptyset,
$$
define a $ \approx\negthinspace(D) $-equivalence class $\gamma ( \frak q ,  \frak r  )$ by
$$
\gamma ( \frak q ,  \frak r  ) = [O( y )]_{\approx(D)}, \quad y \in Y_X^-(\frak q) \cap A(X)
 \cap Y_X^+(\frak r).
$$
As a consequence of  Property $(A)$ of $X$ the $ \approx\negthinspace(D) $-equivalence class $ \gamma ( \frak q ,  \frak r  ) $ is well defined. If
$$
Y_X^-(\frak q) \cap A(X) \cap Y_X^+(\frak r) = \emptyset,
$$
 set
$$
\gamma ( \frak q ,  \frak r  ) = 0.
$$
Identify $\frak p \in \frak P $ with $  \gamma ( \frak p ,  \frak p  )$.
Finally  for $\frak q,\frak r \in \frak P$, and for $u\in Y_X^+(\frak q), v \in Y_X^-(\frak r),$  set
$$
[O(u)]_{\approx (D)}[O(v)]_{\approx (D)} = [u]_{\approx\negthinspace(D)}
\gamma ( \frak q ,  \frak r  )[v]_{\approx (D)}.
$$

An isomorphism $\eta_{\sigma, D}$ of $ [Y_X]_{\approx (X)}$ onto 
$ [\Omega(Y^{(D)}_X)]_{\approx (D)}$ is obtained by choosing out of every 
${\approx\negthinspace(X)}$-equivalence class 
$\alpha$ a point  
$\eta (\alpha)\in Y^{(D)}_X$, and by setting 
$$
\eta^{(D)}_X(\alpha) = [\eta (\alpha)]_{\approx (D)}. 
 $$
 
\begin{theorem}
For a subshift $X \subset\Sigma^{\Bbb Z}$ with Property $(A)$ and for $\sigma\in \Sigma,$ 
the semigroups, that are associated to $X$ and $X^{(\sigma)}$,  are isomorphic.
 \end{theorem}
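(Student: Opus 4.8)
The plan is to show that the point map $\varphi^{(\sigma)}$ induces the isomorphism, so that the two associated semigroups are related exactly as $X$ and $X^{(\sigma)}$ themselves. First I would check that $\varphi^{(\sigma)}$ carries $Y(X)$ into $Y(X^{(\sigma)})$. By the inclusion (1) of Lemma 3.1 and the bijectivity of $\xi_\sigma$ on periodic orbits, $\varphi^{(\sigma)}$ sends $P(A(X))$ into $P(A(X^{(\sigma)}))$; together with the orbit compatibility $\varphi^{(\sigma)}(O_X(x)) \subset O_{X^{(\sigma)}}(\varphi^{(\sigma)}(x))$ this shows that a point left asymptotic to $q$ and right asymptotic to $r$ in $P(A(X))$ is sent to a point left asymptotic to $\varphi^{(\sigma)}(q)$ and right asymptotic to $\varphi^{(\sigma)}(r)$ in $P(A(X^{(\sigma)}))$. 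Hence $\varphi^{(\sigma)}(Y(X)) \subset Y(X^{(\sigma)})$, and I set $\Phi([y]_{\approx(X)}) = [\varphi^{(\sigma)}(y)]_{\approx(X^{(\sigma)})}$.

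Well-definedness and injectivity of $\Phi$ I would obtain simultaneously. The relation $\approx(X)$ is defined through equalities $\Gamma_X(a) = \Gamma_X(\tilde a)$ between finite windows cut from the two points and padded by blocks of the bounding periodic points. Because $\varphi^{(\sigma)}$ commutes with concatenation, the window cut from $\varphi^{(\sigma)}(y)$ (with its endpoints nudged so as to fall between expanded symbols) is precisely the $\varphi^{(\sigma)}$-image of the corresponding window cut from $y$; Lemma 2.2 then gives $\Gamma_X(b) = \Gamma_X(b')$ if and only if $\Gamma_{X^{(\sigma)}}(\varphi^{(\sigma)}(b)) = \Gamma_{X^{(\sigma)}}(\varphi^{(\sigma)}(b'))$. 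Thus $y \approx(X) \tilde y$ holds exactly when $\varphi^{(\sigma)}(y) \approx(X^{(\sigma)}) \varphi^{(\sigma)}(\tilde y)$, so $\Phi$ is both well defined and injective. The length change caused by expansion is absorbed by choosing the constants $H_n$ of $X^{(\sigma)}$ large enough (roughly doubled), with the matching $A_n(X) \leftrightarrow A_{2n}(X^{(\sigma)})$ supplied by Lemma 3.1.

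For the homomorphism property I would use that a product is computed by gluing the asymptotic halves of $u$ and $v$ along a common periodic point, subject to $q \gtrsim(X) r$, and then testing admissibility of the resulting word. Since $\varphi^{(\sigma)}$ is a substitution it commutes with this gluing, so the word built from $\varphi^{(\sigma)}(u)$ and $\varphi^{(\sigma)}(v)$ is the $\varphi^{(\sigma)}$-image of the word built from $u$ and $v$; as $\varphi^{(\sigma)}$ maps $\mathcal{L}(X)$ bijectively onto the admissible words of $X^{(\sigma)}$ in which every $\sigma$ is followed by $\bullet$ and every $\bullet$ preceded by $\sigma$, admissibility is preserved and reflected. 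The order hypothesis transfers by Lemma 4.1. Consequently zero products correspond to zero products and nonzero products are carried to the expected classes, so $\Phi$ is a homomorphism.

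Surjectivity then follows because $\xi_\sigma$ is a bijection of $\Omega(X)$ onto $\Omega(X^{(\sigma)})$ and, by the inclusion (2) of Lemma 3.1, it matches the periodic orbits in $A(X)$ with those in $A(X^{(\sigma)})$: every orbit meeting $Y(X^{(\sigma)})$ is $\xi_\sigma$ of an orbit meeting $Y(X)$, so any $y' \in Y(X^{(\sigma)})$ is a shift of some $\varphi^{(\sigma)}(y)$ with $y \in Y(X)$ and hence $\approx(X^{(\sigma)})$-equivalent to it. I expect the main obstacle to be the index bookkeeping underlying the second and third steps: symbol expansion lengthens windows non-uniformly, so aligning the prescribed cut-points of $\varphi^{(\sigma)}(y)$ with those prescribed for $y$ --- in particular the boundary cases in which a window edge lands on a $\sigma$ or on a $\bullet$, handled just as in the proof of Lemma 3.1 --- is the delicate part, even though every conceptual ingredient is already contained in Lemmas 2.1, 2.2, 3.1 and 4.1.
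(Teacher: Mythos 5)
Your proposal is correct in substance, and its ingredients are exactly those the paper relies on: Lemma 2.2 for transferring equalities of $\Gamma$-sets, Lemma 3.1 for matching $A_n(X)$ with $A_{2n}(X^{(\sigma)})$, Lemma 4.1 for the preorder, the commutation of $\varphi^{(\sigma)}$ with gluing, and the orbit bijection $\xi_\sigma$ for surjectivity; indeed the paper's closing remark of Section 4 confirms that, at the level of orbits, $\xi_\sigma$ induces the isomorphism. The route differs, however, precisely at the point you yourself flag as delicate. You work with the original point-based definition of $\approx(X)$, so well-definedness and injectivity of your $\Phi$ require verifying the $\Gamma$-equality for \emph{all} admissible test windows $a,\tilde a$ of $X^{(\sigma)}$ (with suitably chosen constants $H_n$ for $X^{(\sigma)}$), and these windows have cut points that are stretched non-uniformly by the expansion; you defer this to ``nudging as in Lemma 3.1''. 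The paper's proof avoids that quantification altogether: it first sets up the auxiliary description $[\Omega(Y_X^{(D)})]_{\approx(D)}$ of the associated semigroup relative to a chosen system of representatives $\mathcal D=\{d^{(\frak p)}:\frak p\in\frak P(X)\}$, in which both the equivalence and the product are decided by a \emph{single canonical word} $d^{(\frak q)}_{[0,H(d^{(\frak q)})\pi(d^{(\frak q)}))}\, y_{[I^-(y),I^+(y))}\, d^{(\frak r)}_{[0,H(d^{(\frak r)})\pi(d^{(\frak r)}))}$, with cut points $I^\pm$ and constants $H(d^{(\frak p)})$ determined intrinsically by the representatives. Since $\varphi^{(\sigma)}$ carries $\mathcal D$ to a system of representatives $D^{(\sigma)}$ for $X^{(\sigma)}$, preserves the constants $H(d^{(\frak p)})$ by Lemma 2.2, and satisfies $\varphi^{(\sigma)}(y[u,v])=y[\varphi^{(\sigma)}(u),\varphi^{(\sigma)}(v)]$ by construction, Lemma 2.2 applies verbatim to these canonical words, no window bookkeeping arises, and the desired isomorphism is the composition $\Xi^{(\sigma)}=\eta^{-1}_{\sigma,D}\psi_{\sigma,D}\eta_{\sigma,D}$. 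In short, your route is more direct but leaves the index bookkeeping --- the real technical content of the theorem --- as a sketch, whereas the paper pays the upfront cost of the alternate orbit-based description (introduced, in its own words, ``in order to facilitate the proof'' of invariance) precisely so that this bookkeeping never has to be carried out.
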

\begin{proof}
Set 
$$
 d^{(\frak p^{(\sigma)})}= \varphi^{(\sigma)}( d^{(\frak p}),\qquad \frak p \in \frak P.
$$
One has
$$
\pi(d^{(\frak p^{(\sigma)})})  =  \pi(d^{(\frak p}), \qquad  \frak p \in \frak P ,
$$
and one has by Lemma 2.2, that
$$
H(d^{(\frak p^{(\sigma)})}) = H(d^{(\frak p}),  \qquad  \frak p \in \frak P.
$$
Setting
$$
D^{(\sigma)} = \{ d^{(\frak p^{(\sigma)})}: \frak p \in \frak P \}.
$$
yields a system of representatives of the $\approx\negthinspace(X^{(\sigma)})$-equivalence classes in 
$\frak P(X^{(\sigma)})$. 
By construction
$$
\varphi^{(\sigma)}(y[u,v]) = y[\varphi^{(\sigma)} (u), \varphi^{(\sigma)}(v)],\quad u, v \in Y_X.
$$
Also, by Lemma 3.1, for $\frak q,\frak r  \in \frak P  $,
$$
Y_X^-(\frak q) \cap A(X) \cap Y_X^-(\frak r) \neq \emptyset,
$$
if and only if
$$
Y_X^-(\frak q^{(\sigma)}) \cap A(X^{(\sigma)})
 \cap Y_X^-(\frak r^{(\sigma)})\neq \emptyset.
$$
It follows that an isomorphism 
$\psi_{\sigma, D}$ of $[Y^{(D)}_X]_{\approx(D)}$ onto  $[Y^{(D^{(\sigma)})}_{X^{(\sigma)}}]_{\approx(D^{(\sigma)})} $ is given by setting
$$
\psi_{\sigma, D}([y]) = [\varphi^{(\sigma)}(y)],
\qquad y \in Y^{(D)}_X,
$$
and one obtains an isomorphism $\Xi^{(\sigma)}$ of 
$[Y_X]_{\approx(X)}$ onto  $[Y_{X^{(\sigma)}}]_{\approx(X^{(\sigma)})} $ by setting
$$
\Xi^{(\sigma)} = \eta^{-1}_{\sigma, D}\psi_{\sigma, D} \eta_{\sigma, D}. \qed
$$ 
\renewcommand{\qedsymbol}{}
\end{proof}

For  the invariance of the associated semigroup under flow equivalenc,  under the assumption that 
$A(X)$ is dense in $X$, or in the sofic case, see also \cite[Theorem 9.20]{CS}).

The semigroup $[Y_X^{(D)}]_{\approx(D)}$ is a set of equivalence classes of orbits.
As originally done in \cite{Kr1}, we have introduced the associated semigroup of a subshift with Property $(A)$ in terms of equivalence classes of points, rather than equivalence classes of orbits.
However, since points in $Y_X$, that are in the same orbit, are $\approx(X)$-equivalent, 
one can define the associated semigroup in the first place as a set of equivalence classes of orbits. The same remark applies to the set of idempotents $\frak P$.
When the associated semigroup is introduced as a set of equivalence classes of orbits, then the mapping $\xi_\sigma$ is seen to induce the isomorphism of the associated semigroup of $X$ onto the associated semigroup of $X^{(\sigma)}$.

\section{$\mathcal R$-graph shifts}

Given finite sets $\mathcal E^-$ and $\mathcal E^+$ and a relation 
$\mathcal R \subset \mathcal E^-  \times  \mathcal E^+  $, we set
$$
\mathcal E^-(\mathcal R) = \{e^-  \in  \mathcal E^-: \{e^-  \}\times \mathcal E^+\subset \mathcal R  \},
\quad\mathcal E^+(\mathcal R) = \{e^+  \in  \mathcal E^+: \mathcal E^-\times \{e^+  \}\subset \mathcal R  \}.
$$
and
\begin{align*}
&\Omega^+_\mathcal R(e^-) = \{e^+ \in  \mathcal E^+ : ( e^-,e^+ ) \in \mathcal R\}, \quad  e^- \in \mathcal E^- , \\
&\Omega^-_\mathcal R(e^+) = \{e^- \in  \mathcal E^-: ( e^-,e^+ ) \in \mathcal R \}, \quad  e^+ \in \mathcal E^+.\end{align*}

We recall from \cite{Kr2} the notion of  an $\mathcal R$-graph.
Let there be given 
a finite  directed graph with vertex set $\frak P$ and edge set $\mathcal E$. Assume also given a partition 
$$
\mathcal E = \mathcal E^-  \cup\mathcal E^+.
$$
With $s$ and $t$ denoting the source  and the target vertex of a directed edge
we set
\begin{align*}
& \mathcal E^- (\frak q,\frak r) = \{ e^- \in  \mathcal E^- : s(e^-) = \frak q,\  t( e^-) =  \frak r \},
\\
& \mathcal E^+(\frak q,\frak r) = \{ e^- \in  \mathcal E^+ : s(e^+) = \frak r,\  t( e^+) =  \frak q \}, \qquad  \frak q,\frak r \in \frak P.
\end{align*}
 We assume that $ \mathcal E^- (\frak q,\frak r)  \neq \emptyset$ if and only if $  \mathcal E^+(\frak q,\frak r) \neq \emptyset,  \frak q,\frak r \in \frak P$, and we assume that 
the  directed graph $(\frak P  ,  \mathcal E^-   )$ is strongly connected, or, equivalently, that 
the  directed graph $(\frak P  ,  \mathcal E^+ )$ is strongly connected.
Let there further be given relations 
$$
\mathcal R    (\frak q,\frak r) \subset   \mathcal E^- (\frak q,\frak r)   \times   \mathcal E^+(\frak q,\frak r) , \qquad \frak q,\frak r \in \frak P,
$$
and set
$$
\mathcal R = \bigcup_{ \frak q,\frak r \in \frak P} \mathcal R    (\frak q,\frak r) .
$$
The resulting structure, for which we use the notation $\mathcal G_\mathcal R(\frak P, \mathcal E^-,\mathcal E^+)$, is called an $\mathcal R$-graph. 

We also recall 
the construction of a semigroup (with zero)  $\mathcal S_{\mathcal R}(\frak P,   \mathcal E^-, \mathcal E^+   )$ 
from an $\mathcal R$-graph $\mathcal G_\mathcal R(\frak P, \mathcal E^-,\mathcal E^+)$ as described in \cite {Kr2}.   
The semigroup $\mathcal S_{\mathcal R}(\frak P,   \mathcal E^-, \mathcal E^+   )$ contains idempotents $\bold 1_{\frak p}, \frak p \in \frak P,$ and  has $\mathcal E$ as a generating set. Besides $\bold 1_{\frak p}^2 = \bold 1_{\frak p},\frak p \in \frak P$, the defining relations are:
$$
f^- g^+ =\bold 1_{\frak q}, \quad f^- \in  \mathcal E^- (\frak q,\frak r), g^+ \in  \mathcal E^+(\frak q,\frak r) , (  f^- ,  g^+ ) \in \mathcal R    (\frak q,\frak r), \quad \frak q,\frak r \in\frak P,
$$
and
\begin{align*}
&\bold 1_{\frak q} e^- = e^- \bold 1_{\frak r} = e^-, \quad e^- \in  \mathcal E^- (\frak q,\frak r), \\
&\bold 1_{\frak r} e^+ = e^+ \bold 1_{\frak q} = e^+, \quad e^+ \in  \mathcal E^+ (\frak q,\frak r), \quad  \frak q,\frak r \in\frak P,
\end{align*}
$$
f^- g^+  = \begin{cases}{ \bold 1}_{{\frak q}}, &\text {if $ (  f^- ,  g^+ ) \in{ \mathcal R}({\frak q},{\frak r}) $,}\\
0, & \text{if $ (  f^- ,  g^+ )\notin {\mathcal R}({\frak q},{\frak r}), \quad f^- \in  {\mathcal E}^- ({\frak q},{\frak r}), g^+ \in { \mathcal E}^+({\frak q},{\frak r)},        
\
{\frak q},  {\frak r} \in{\frak P},
 $}
\end{cases}
$$
and
$$
\bold 1_{\frak q}\bold 1_{\frak r}= 0,    \quad  \frak q,\frak r \in \frak P,\frak q \neq\frak r .
$$
The semigroup $\mathcal S_{\mathcal R}(\frak P,   \mathcal E^-, \mathcal E^+ )  $ is called an $ \mathcal R$-graph semigroup.

The $\mathcal R$-graph shift $M\negthinspace D_{\mathcal R}(\frak P,   \mathcal E^-, \mathcal E^+ )  $ of the 
$\mathcal R$-graph $\mathcal G_\mathcal R(\frak P, \mathcal E^-,\mathcal E^+)$ is the subshift
$$
M\negthinspace D_{\mathcal R}(\frak P,   \mathcal E^-, \mathcal E^+ ) 
 \subset( \{  \mathcal E^- \cup \mathcal E^+)^\Bbb Z
$$
with the admissible words $(\sigma_i)_{1 \leq i \leq I  } , I \in \Bbb N,$ of 
$M\negthinspace D_{\mathcal R}(\frak P,   \mathcal E^-, \mathcal E^+ )$  given by the condition 
\begin{align*}
\prod_{1 \leq i \leq I } \sigma_i \neq 0.
\end{align*}

For an $\mathcal R$-graph $\mathcal G_\mathcal R(\frak P, \mathcal E^-, \mathcal E^+)$ 
we denote by 
$\frak P^{(1)}$ the set of vertices in $\frak P$ that have a single predecessor vertex in $\mathcal E^-$, or, equivalently, that have a single successor vertex in $\mathcal E^+$. For $\frak p \in\frak P^{(1)}$ the predecessor vertex of  $\frak p $ in $\mathcal E^-$, which is identical to the successor vertex of  
$\frak p $ in  $\mathcal E^+$,
is denoted by $\kappa(\frak p)$.
We set
$$
\mathcal E^-_\mathcal R = \bigcup_{\frak p \in \frak P^{(1)}}\mathcal E^-(\mathcal R(\kappa (\frak p), \frak p))  ,
\quad
\mathcal E^+_\mathcal R = \bigcup_{\frak p \in \frak P^{(1)}}\mathcal E^+(\mathcal R(\kappa (\frak p), \frak p)) ,
$$
and
$$
{ \frak P}_\mathcal  R ^{(1)}=
\{ \frak p \in 
\frak P^{(1)}: 
 \mathcal R(\kappa (\frak p), \frak p)    =   \mathcal E^-(\kappa (\frak p), \frak p)  \times \mathcal E^+(\kappa (\frak p), \frak p)   \}.
$$

We formulate conditions $(a)$, $(b)$, $(c)$ and $(d)$ on  an $\mathcal R$-graph
$\mathcal G_{\mathcal R}(\frak P,   \mathcal E^-, \mathcal E^+ )  $ as follows:

\noindent 
\begin{align*}
 \Omega^+_{\mathcal R(\frak q,\frak r)}(e^-) \neq \Omega^+(\tilde e^-),\qquad \quad e^-,\tilde e^-\in  \mathcal E^- (\frak q,\frak r),e^-\neq\tilde e^-, \qquad \frak q,\frak r \in \frak P,
  \tag{$a-$} 
\end{align*}
\begin{align*}
\Omega^-_{\mathcal R(\frak q,\frak r)}(e^+) \neq \Omega^-(\tilde e^+), \qquad \quad e^+,\tilde e^+\in  \mathcal E^+ (\frak r, \frak q),e^+\neq\tilde e^+,\qquad
\frak q,\frak r \in \frak P.   \tag {$a+$} 
\end{align*}
\noindent
$(b-)$ \qquad There is no non-empty  cycle  
in 
$\mathcal E^-_\mathcal R $,

\bigskip
\noindent
$(b+)$ \qquad There is no non-empty cycle 
in $\mathcal E^+_\mathcal R $,

\bigskip
\noindent
$(c)$\qquad  \  \ \  \ For $\frak p \in  \frak P^{(1)}$  such that $\kappa (\frak p ) \neq \frak p$, $\mathcal E^-_\mathcal R(\frak p)= \emptyset,$ or  $\mathcal E^-_\mathcal R(\frak p)= \emptyset$,

\bigskip
\noindent
$(d)$\qquad  \  \ \  \ For $\frak q, \frak r \in  \frak P^{(1)}$ ,  $\frak q  \neq \frak r$, there do not simultaneously exist a path  in $\mathcal E^-_\mathcal R $
 from 
  $\frak q$ to  $\frak r$ 
 and a path in $\mathcal E^+_\mathcal R $ from $\frak q$  to $\frak r$,

\begin{theorem}
For $\mathcal R$-graphs 
$\mathcal G_{\mathcal R}(\frak P,   \mathcal E^-, \mathcal E^+ )  $ that satisfy the Conditions 
$(a)$, $(b)$, $(c)$ and $(d)$
the flow equivalence of the $\mathcal R$-graph shifts 
$D_{\mathcal R}(\frak P,   \mathcal E^-, \mathcal E^+ )  $
 implies the isomorphism of the $\mathcal R$-graphs 
 $\mathcal G_{\mathcal R}(\frak P,   \mathcal E^-, \mathcal E^+ )  $.
\end{theorem}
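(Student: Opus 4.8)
The plan is to reduce the statement to a purely algebraic reconstruction problem: to show that, under Conditions $(a)$--$(d)$, the $\mathcal R$-graph $\mathcal G_{\mathcal R}(\frak P, \mathcal E^-, \mathcal E^+)$ can be recovered, up to isomorphism, from its $\mathcal R$-graph semigroup $\mathcal S_{\mathcal R}(\frak P, \mathcal E^-, \mathcal E^+)$. Granting this, the theorem follows at once. By the criterion of \cite{HK}, Conditions $(a)$--$(d)$ guarantee that the $\mathcal R$-graph shift $D_{\mathcal R}(\frak P, \mathcal E^-, \mathcal E^+)$ has Property $(A)$ and that its associated semigroup is isomorphic to $\mathcal S_{\mathcal R}(\frak P, \mathcal E^-, \mathcal E^+)$. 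If two such $\mathcal R$-graph shifts are flow equivalent, then Theorem 3.2 shows the flow-equivalent partner again has Property $(A)$, and Theorem 4.2 shows that the two associated semigroups are isomorphic. Composing, the two $\mathcal R$-graph semigroups are isomorphic, and the reconstruction step then produces an isomorphism of the underlying $\mathcal R$-graphs.

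For the reconstruction one works inside an abstract semigroup $\mathcal S$ presented as an $\mathcal R$-graph semigroup and extracts the data $(\frak P, \mathcal E^-, \mathcal E^+, \mathcal R)$ from the multiplicative structure alone. The vertex set $\frak P$ is recovered as the maximal nonzero idempotents of $\mathcal S$: every nonzero idempotent of reduced form $w^+ w^-$ lies below some $\mathbf 1_{\frak p}$ in the natural order on idempotents, and the relations $\mathbf 1_{\frak q}\mathbf 1_{\frak r} = 0$, $\frak q \neq \frak r$, single out the $\mathbf 1_{\frak p}$ as a maximal orthogonal family. For $e^- \in \mathcal E^-(\frak q,\frak r)$ the relations $\mathbf 1_{\frak q} e^- = e^- \mathbf 1_{\frak r} = e^-$ recover source and target, and similarly for $\mathcal E^+$; and the relation $\mathcal R$ is visible directly, since $(f^-, g^+) \in \mathcal R(\frak q,\frak r)$ exactly when $f^- g^+ = \mathbf 1_{\frak q} \neq 0$. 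The remaining, and essential, task is to recover the generating set $\mathcal E^- \cup \mathcal E^+$ and its partition intrinsically, so that an arbitrary abstract isomorphism of $\mathcal R$-graph semigroups is forced to respect it.

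The main obstacle is that the reduction $f^- g^+ = \mathbf 1_{\frak q}$, present whenever $(f^-, g^+) \in \mathcal R$, destroys the clean normal-form behavior of a graph inverse semigroup: word length is not monotone under multiplication, and a generator such as $h^-$ may be refactored as $f^- g^+ h^-$ with $f^- g^+ = \mathbf 1_{\frak q}$, so the generators are not simply the multiplicatively indecomposable elements. This is precisely what Conditions $(a)$--$(d)$ are meant to overcome. Conditions $(a-)$ and $(a+)$ guarantee that an edge is determined by its $\mathcal R$-neighborhood, namely $\Omega^+_{\mathcal R(\frak q,\frak r)}(e^-)$ for $e^-$ and dually for $e^+$, so that distinct generators stay distinct and each generator is identified by the family of idempotents it produces. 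Conditions $(b-)$ and $(b+)$, the absence of nonempty cycles in $\mathcal E^-_{\mathcal R}$ and $\mathcal E^+_{\mathcal R}$, are used to rule out infinite descending chains of such refactorizations, restoring a well-founded length and making the generators recoverable as the resulting minimal elements. Conditions $(c)$ and $(d)$ control the degenerate vertices $\frak P^{(1)}$ with a single predecessor, where $\mathcal E^-_{\mathcal R}$ and $\mathcal E^+_{\mathcal R}$ meet and where the recovery of source, target, and the sign partition could otherwise fail. I expect the delicate point to be exactly this interplay at $\frak P^{(1)}$: one must verify that under $(c)$ and $(d)$ the two sign classes cannot be interchanged by the isomorphism, beyond the harmless global duality already built into the notion of an $\mathcal R$-graph, and that the minimal-length characterization of edges is stable along the isomorphism. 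Once the generating set, its partition, and the incidence with $\frak P$ are shown to be intrinsic, transporting $(\frak P, \mathcal E^-, \mathcal E^+, \mathcal R)$ along the semigroup isomorphism furnished by Theorem 4.2 yields the desired isomorphism of $\mathcal R$-graphs.
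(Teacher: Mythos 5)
Your reduction coincides with the paper's proof in its first two steps: Conditions $(a)$--$(d)$ are invoked, via \cite{HK}, exactly so that the shift $D_{\mathcal R}(\frak P, \mathcal E^-, \mathcal E^+)$ has Property $(A)$ with associated semigroup $\mathcal S_{\mathcal R}(\frak P, \mathcal E^-, \mathcal E^+)$, and Theorems 3.2 and 4.2 then convert flow equivalence into an isomorphism of the two $\mathcal R$-graph semigroups. The divergence --- and the gap --- is the final reconstruction step. The paper does not prove that step either; it cites Theorem 2.1 of \cite{Kr2}, which states that an isomorphism of $\mathcal R$-graph semigroups already implies an isomorphism of the underlying $\mathcal R$-graphs, and it applies this as a general fact, with no appeal to Conditions $(a)$--$(d)$. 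You, by contrast, undertake to prove the reconstruction yourself, and your argument stops exactly at the hard part: the intrinsic recovery of the generating set $\mathcal E^- \cup \mathcal E^+$, of its sign partition, and the verification that an arbitrary abstract isomorphism must respect these data are precisely the items you label ``the remaining, and essential, task'' and ``the delicate point,'' and they are addressed only by expectations (``I expect\dots'', ``one must verify\dots''), not by proofs. As it stands, the proposal is the paper's proof with its key citation replaced by an unfinished program, so it is incomplete as a proof.

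A secondary but instructive point is that you misplace the role of Conditions $(a)$--$(d)$. In the paper they are dynamical hypotheses: they are the criterion of \cite{HK} (Theorems 2.3 and 6.1 there) for $D_{\mathcal R}$ to have Property $(A)$ and for its associated semigroup to be $\mathcal S_{\mathcal R}$; they play no role in the algebraic passage from the semigroup back to the graph. Your sketch instead spends them on the algebra --- $(b\pm)$ to make a refactorization length well-founded, $(c)$ and $(d)$ to prevent interchange of the sign classes at the vertices in $\frak P^{(1)}$ --- which makes the reconstruction look more fragile than it is and commits you to proving a conditional version of a statement that, per \cite{Kr2}, holds unconditionally for $\mathcal R$-graph semigroups. (Logically this would still suffice here, since both graphs in question satisfy the conditions, so the misattribution is not fatal; the fatal issue is that the reconstruction is not carried out.) Your opening moves are reasonable --- recovering $\frak P$ from the maximal mutually orthogonal nonzero idempotents, reading off $\mathcal R$ from which products $f^- g^+$ equal a nonzero idempotent --- but until the intrinsic characterization of the edge generators and their partition is actually established, or the result of \cite{Kr2} is invoked in its place, the theorem has not been proved.
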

\begin{proof}
By Theorem 2.3 of \cite {HK} and  Theorem 6.1 of \cite {HK} the conditions imply that the  $\mathcal R$-graph shift 
$D_{\mathcal R}(\frak P,   \mathcal E^-, \mathcal E^+ )  $
has property $(A)$, and that the semigroup, that is associated to it, is 
 $\mathcal S_{\mathcal R}(\frak P,   \mathcal E^-, \mathcal E^+ )$. By Theorem 4.2 the flow equivalence of the shifts $D_{\mathcal R}(\frak P,   \mathcal E^-, \mathcal E^+ )  $ implies the isomorphism of the $\mathcal R$-graph semigroups  
 $\mathcal S_{\mathcal R}(\frak P,   \mathcal E^-, \mathcal E^+ )  $,
 which in turn, by Theorem 2.1  of \cite {Kr2}, implies the isomorphism of the $\mathcal R$-graphs 
 $\mathcal G_{\mathcal R}(\frak P,   \mathcal E^-, \mathcal E^+ )$.
\end{proof}

Theorem 5.1 extends the result of Costa and Steinberg, that the flow equivalence of Markov-Dyck shifts of finite irreducible  directed graphs, in which every vertex has at least two incoming edges, implies the isomorphism of the graphs
(see \cite[Theorem 8.6]{CS}),

Let for $K > 1$, $B_K$ denote the full shift on $K$ symbols, and let $D_2$ denote the Dyck shift on four symbols. The shifts $D_2\times B_K, K > 1,$ belong to the class of $\mathcal R$-graph shifts. 
They arise from the one-vertex $\mathcal R$-graphs  
$\mathcal G_{\mathcal R}(\{\frak p\},   \mathcal E^-, \mathcal E^+ )$, where
\begin{align*}
\mathcal E^- = \{  e^-( m,\beta):1\leq m \leq K ,\beta = 0, 1   \}, \
\mathcal E^+ =\{  e^-( l,\beta):1\leq l \leq K ,\beta = 0, 1   \},
\end{align*}
and where
$$ (e^-m,(\beta^-) ,    e^-(l,\beta^+)) \in \mathcal R( \frak p,\frak p ),$$
if and only if $$ \beta^-  =  \beta^+ , \quad 1\leq m \leq K, 1\leq l \leq K. $$
These $\mathcal R$-graphs do not satisfy the conditions of Theorem 5.1, but the $ \mathcal R$-graph shifts $D_2\times B_K, K > 1,$
have Property $(A)$,
and the flow equivalence of their  $\mathcal R$-graph shifts $D_2\times B_K, K > 1,$ still implies  the isomorphism of these $\mathcal R$-graphs.
This can be seen from the invariance under flow equivalence of the K-groups of subshifts as shown by Matsumoto in \cite {M1}, and from
$$
K_0(D_2\times B_K ) = \Bbb Z [\tfrac{1}{n}]^\infty, \quad  K > 1,
$$
as also shown by Matsumoto \cite[Section 8]{M2}.
Note that the associated semigroup of $D_2\times B_K, K > 1,$  is the Dyck inverse monoid $\mathcal D_2$.

\par\noindent Wolfgang Krieger
\par\noindent Institute for Applied Mathematics, 
\par\noindent  University of Heidelberg,
\par\noindent Im Neuenheimer Feld 294, 
 \par\noindent 69120 Heidelberg,
 \par\noindent Germany
\par\noindent krieger@math.uni-heidelberg.de

 \end{document}